\documentclass[12pt]{amsart}
\usepackage{amsmath}
\usepackage{amssymb,latexsym,amsthm,amsfonts,amscd,enumerate}
\usepackage{geometry}
\usepackage{graphicx}
\usepackage[pdftex]{hyperref}
\newtheorem{theorem}{Theorem}[section]
\newtheorem{lemma}[theorem]{Lemma}
\newtheorem{proposition}[theorem]{Proposition}
\newtheorem{corollary}[theorem]{Corollary}
\newtheorem{remark}[theorem]{Remark}

\numberwithin{equation}{section}

\oddsidemargin -.2in

\parindent .4in
\topmargin .2in
\evensidemargin -.2in
\textwidth 7in

\begin{document}

\baselineskip=15.5pt

\title[$1$-Skeleton Ideals of Graphs and Their Signless Laplace Matrices]{Standard Monomials of $1$-Skeleton Ideals of Graphs and Their Signless Laplace Matrices}

\author[C. Kumar]{Chanchal Kumar}

\address{IISER Mohali,
Knowledge City, Sector 81, SAS Nagar, Punjab -140 306, India.}

\email{chanchal@iisermohali.ac.in}

\author[G. Lather]{Gargi Lather}

\address{IISER Mohali,
Knowledge City, Sector 81, SAS Nagar, Punjab -140 306, India.}

\email{gargilather@iisermohali.ac.in}

\author[A. Roy]{Amit Roy}

\address{IISER Mohali,
Knowledge City, Sector 81, SAS Nagar, Punjab -140 306, India.}

\email{amitroy@iisermohali.ac.in}

\subjclass[2010]{05E40, 15B36}
\date{}

\begin{abstract}
Let $G$ be a (multi) graph on the vertex set $V=\{0,1,\ldots ,n\}$ with root $0$. The $G$-parking function ideal $\mathcal{M}_G$ is a monomial ideal in the polynomial ring $R=\mathbb{K}[x_1,\ldots ,x_n]$ over a field $\mathbb{K}$ such that $\dim_{\mathbb K}\left(\frac{R}{\mathcal{M}_G}\right)=\det\left(\widetilde{L}_G\right)$, where $\widetilde{L}_G$ is the truncated Laplace matrix of $G$ and $\det\left(\widetilde L_G\right)$ is the determinant of $\widetilde L_G$. In other words, standard monomials of the Artinian quotient $\frac{R}{\mathcal{M}_G}$ correspond bijectively with the spanning trees of $G$. For $0\leq k\leq n-1$, the $k$-skeleton ideal $\mathcal{M}_G^{(k)}$ of $G$ is the monomial subideal $\mathcal{M}_G^{(k)}=\left\langle m_A:\emptyset\neq A\subseteq[n]\text{ and }|A|\leq k+1\right\rangle$ of the $G$-parking function ideal
 $\mathcal{M}_G=\left\langle m_A
: \emptyset \neq A\subseteq[n]\right\rangle\subseteq R$. For  a
simple graph $G$, Dochtermann conjectured that $\dim_{\mathbb K}\left(\frac{R}{\mathcal{M}_G^{(1)}}\right)\geq\det\left(\widetilde{Q}_G\right)$, where $\widetilde Q_G$ is the truncated signless Laplace matrix of $G$. We show that Dochtermann conjecture holds for any (simple or multi) graph $G$ on $V$. 

\noindent
{\sc Key words}: Standard monomials, signless Laplace matrix, parking functions.

\end{abstract}

\maketitle

\section{Introduction}
Let $G$ be a multigraph on the vertex set $V=\{0,1,\ldots ,n\}=\{0\}\cup[n]$ with root $0$ and adjacency matrix $A(G)=[a_{ij}]_{0\leq i,j\leq n}$. Let $E(i,j)$ be the set of edges between $i,j\in V$. Then $E(i,j)=E(j,i)$ and $|E(i,j)|=a_{ij}=a_{ji}$. We always assume that $G$ is loopless, i.e., $a_{ii}=0$ for every $ i\in V$. For $\emptyset\neq A\subseteq [n]=\{1,2,\ldots ,n\}$, set $d_A(i)=\sum_{j\in V\setminus A}a_{ij}$, for $i\in A$. Then $d_i=d_{\{i\}}(i)$ is the degree of the vertex $i$ in $G$. Let $D={\rm diag}[d_0,d_1,\ldots ,d_n]$ be the diagonal matrix of order $n+1$. The Laplace matrix $L_G$ and the signless Laplace matrix $Q_G$ of $G$ are given by
\[L_G=D-A(G) \quad {\rm and}\quad Q_G=D+A(G).\] On deleting row and column corresponding to the root $0$ from $L_G$ and $Q_G$, we  obtain truncated Laplace matrix $\widetilde L_G$ and truncated signless Laplace matrix $\widetilde Q_G$ of $G$, respectively. Let $R=\mathbb{K}[x_1,\ldots ,x_n]$ be the polynomial ring in $x_1,\ldots ,x_n$ over a field $\mathbb{K}$. Sometimes, we write $R=R_n$ to indicate the number of variables in the polynomial ring. The monomial ideal $\mathcal{M}_G$ in $R$ given by 
\[\mathcal{M}_G=\left\langle m_A=\prod_{i\in A}x_i^{d_A(i)}:\emptyset\neq A\subseteq [n]\right\rangle\]
is called the {\em $G$-parking function ideal}. The monomial ideal $\mathcal{M}_G$, more generally for directed graph $G$ on $V$, has been introduced by Postnikov and Shapiro \cite{PS}. The standard monomials $\mathbf{x}^{\mathbf{p}}=x_1^{p_1}x_2^{p_2}\cdots x_n^{p_n}$ of $\frac{R}{\mathcal{M}_G}$ (i.e., $\mathbf{x}^{\mathbf{p}}\notin \mathcal{M}_G$) correspond to $G$-parking functions $\mathbf{p}=(p_1,\ldots,p_n)
\in \mathbb{N}^n$ and $\dim_{\mathbb K}\left(\frac{R}{\mathcal{M}_G}\right)=\det\widetilde L_G$. Thus by the matrix tree theorem, number of $G$-parking functions equals the number of spanning trees of $G$. An algorithmic bijection between the set of $G$-parking functions and the set of spanning trees of $G$ is given by Perkinson, Yang and Yu \cite{PYY} for simple graph and by Gaydarov and Hopkins \cite{GH} for multigraph.

For $0\leq k\leq n-1$, we consider the monomial subideal $\mathcal{M}_G^{(k)}$ of $\mathcal{M}_G$ given by $\mathcal{M}_G^{(k)}=\left\langle m_A=\prod_{i\in A}x_i^{d_A(i)}:\emptyset\neq A\subseteq [n] \quad {\rm and}\quad |A|\leq k+1\right\rangle$ and call it the {\em $k$-skeleton ideal} of $G$. Dochtermann \cite{D,Do} showed that like $\mathcal{M}_G$, the $k$-skeleton ideals $\mathcal{M}_G^{(k)}$ of $G$ also have many interesting combinatorial properties. He 
verified that 
$\dim_{\mathbb K}\left(\frac{R}{\mathcal{M}_G^{(1)}}\right)=\det\widetilde Q_G$ for the complete graph $G=K_{n+1}$ and conjectured the inequality 
$\dim_{\mathbb K}\left(\frac{R}{\mathcal{M}_G^{(1)}}\right)\geq\det\widetilde Q_G$ for any simple graph $G$ on $V$.

Let $a,b$ be positive integers. The complete multigraph $K_{n+1}^{a,b}$ on $V$ is given by the adjacency matrix $A\left(K_{n+1}^{a,b}\right)=[a_{ij}]_{0\leq i,j\leq n}$ with $a_{i0}=a_{0i}=a$ and $a_{ij}=a_{ji}=b$ for $i,j\in[n]$; $i\neq j$. Let $G$ be a subgraph of the complete multigraph $K_{n+1}^{a,b}$ obtained by deleting some edges through the root $0$. Then we show (Theorem \ref{RC}) that $\dim_{\mathbb{K}}\left(\frac{R}{\mathcal{M}_G^{(1)}}\right)=\det\widetilde Q_G$. Also for any (simple or multi) graph $G$ on $V$, we show (Corollary \ref{MG}) that $\dim_{\mathbb{K}}\left(\frac{R}{\mathcal{M}_G^{(1)}}\right)\geq\det\widetilde Q_G$. In fact, corresponding to any positive semidefinite matrix $H=[\alpha_{ij}]_{n\times n}$ over $\mathbb{N}$ satisfying $\alpha_i=\alpha_{ii}
\geq \max_{j \neq i} \alpha_{ij}$ for $1 \le i \le n$, we consider
the  monomial ideal
\[\mathcal{J}_H=\left\langle x_l^{\alpha_l},x_i^{\alpha_i-\alpha_{ij}}x_j^{\alpha_j-\alpha_{ij}}:1\leq l\leq n,1\leq i < j\leq n\right\rangle\]
in $R$. Using Courant-Weyl inequalities and Fischer's inequality on the determinant of
positive semidefinite matrices, we obtain (Theorem \ref{MT}) $\dim_{\mathbb{K}}\left(\frac{R}{\mathcal{J}_H}\right)\geq\det H$.

\section{Complete multigraphs and parking functions}

Let $K_{n+1}$ be the complete (simple) graph on $V$. Then $K_{n+1}$-parking functions are precisely (ordinary) parking functions of length $n$. More generally, if $\lambda=(\lambda_1,\lambda_2,\ldots ,\lambda_n)\in\mathbb{N}^n$ with $\lambda_1\geq\lambda_2\geq\cdots\geq\lambda_n\geq 1$, then a finite sequence $\mathbf{p}=(p_1,\ldots ,p_n)\in\mathbb{N}^n$ is called a {\em $\lambda$-parking function} if a non-decreasing rearrangement $p_{j_1}\leq p_{j_2}\leq\cdots\leq p_{j_n}$ of $\mathbf{p}$ satisfies $p_{j_i}<\lambda_{n-i+1}$ for $1 \le i \le n$. Let ${\rm PF}(\lambda)$ be the set of $\lambda$-parking functions. An ordinary parking function of length $n$ is a $\lambda$-parking function for $\lambda=(n,n-1,\ldots ,2,1)$.

For $\lambda=(\lambda_1,\ldots ,\lambda_n)\in\mathbb{N}^n$ with $\lambda_1\geq\lambda_2\geq\cdots\geq\lambda_n\geq 1$, let
\[\Lambda(\lambda)=\Lambda(\lambda_1,\lambda_2,\ldots ,\lambda_n)=
\left[
\frac{\lambda_{n-i+1}^{j-i+1}}{(j-i+1)!}
\right]_{1\leq i,j\leq n}
\]
be a $n\times n$ Steck matrix, whose $(i,j)$th entry is $\frac{\lambda_{n-i+1}^{j-i+1}}{(j-i+1)!}$ if $i\leq j+1$, and $0$, otherwise. Consider the monomial ideal $\mathcal{M}_{\lambda}=\left\langle \left(\prod_{i\in A}x_i\right)^{\lambda_{|A|}}:\emptyset\neq A\subseteq[n]\right\rangle$ in $R$. Then the standard monomials of $\frac{R}{\mathcal{M}_{\lambda}}$ are precisely $\lambda$-parking functions and by Steck determinant formula (see \cite{PiSt}), the number of $\lambda$-parking function is given by
\[\dim_{\mathbb{K}}\left(\frac{R}{\mathcal{M}_{\lambda}}\right)=|{\rm PF}(\lambda)|=
n!\det\left(\Lambda(\lambda)\right).\]
The Steck determinant $\det(\Lambda(\lambda)) $ can be easily evaluated for the sequence 
$\lambda = (\lambda_1,\ldots,\lambda_n)$ of special types, for example, it is in arithmetic progression
(see \cite{CK, KY, PiSt}). Let $x$ be a variable and $b\in\mathbb N$. Suppose $f_n^b(x)=\det(\Lambda(x+(n-1)b,x+(n-2)b,\ldots ,x+b,x))$ and $g_n^b(x)=\det\left(\Lambda(x+b,\underbrace{x,\ldots ,x}_{n-1})\right)$. Then $f_n^b(x)$ and $g_n^b(x)$ are polynomials in $x$ of degree $n$ given by
\[f_n^b(x)=\frac{x(x+nb)^{n-1}}{n!} \quad {\rm and}
\quad g_n^b(x)=\frac{x^{n-1}(x+nb)}{n!}.\]
In case $b=1$, we get $f_n(x)=f_n^1(x)=\frac{x(x+n)^{n-1}}{n!}$  and $ g_n(x)=g_n^1(x) = 
\frac{x^{n-1}(x+n)}{n!}$. Hence for the complete graph $K_{n+1}$, we have
\[\dim_{\mathbb K}\left(\frac{R}{\mathcal{M}_{K_{n+1}}}\right)=(n+1)^{n-1} \quad {\rm and}\quad \dim_{\mathbb K}\left(\frac{R}{\mathcal{M}_{K_{n+1}}^{(1)}}\right)=(n-1)^{n-1}(2n-1)=\det\left(\widetilde Q_{K_{n+1}}\right).\]
More generally, for complete multigraph $K_{n+1}^{a,b}$,
we have
\[\dim_{\mathbb K}\left(\frac{R}{\mathcal{M}_{K_{n+1}^{a,b}}}\right)=a(a+nb)^{n-1} \quad {\rm and}\quad \dim_{\mathbb K}\left(\frac{R}{\mathcal{M}_{K_{n+1}^{a,b}}^{(1)}}\right)= (n!) g_n^b(a+(n-2)b) .\]
It can be easily verfied that
\begin{eqnarray}
\dim_{\mathbb K}\left(\frac{R}{\mathcal{M}_{K_{n+1}^{a,b}}^{(1)}}\right)=(a+(n-2)b)^{n-1}(a+(2n-2)b)
=\det\left(\widetilde Q_{K_{n+1}^{a,b}}\right).
\label{e0}
\end{eqnarray}

Let $0\leq r\leq n$ and $G_{n,r}$ be the graph obtained from $K_{n+1}$ on deleting precisely $r$ edges through root $0$. We have $G_{n,0}=K_{n+1}$. On renumbering vertices, we assume that the deleted edges are between $0$ and $i$ for $n-r+1\leq i\leq n$. We proceed to verify that 
$\dim_{\mathbb K}\left(\frac{R}{\mathcal{M}_{G_{n,r}}^{(1)}}\right)=\det\left(\widetilde Q_{G_{n,r}}\right)$.

Let $a$ be a fixed positive integer and let $\omega$ be a weight function (depending on $r\in[0,n]$) given by 
$\omega(i)=
\begin{cases}
a \quad  &{\rm if}\quad i\in[n-r], \\
a-1 \quad &{\rm if}\quad i\in[n]\setminus [n-r].
\end{cases}$
Let $\mathcal{I}_{n,r}^{\langle a\rangle}$ be a monomial ideal in $R_n=\mathbb K[x_1,\ldots ,x_n]$ given by 
\[ \mathcal{I}_{n,r}^{\langle a\rangle}=\left\langle x_i^{\omega(i)},x_i^{\omega(i)-1}x_j^{\omega(j)-1}:i,j\in[n] \quad {\rm and} \quad i\neq j\right\rangle.
\]
Clearly, $\mathcal{I}_{n,r}^{\langle n \rangle}=
\mathcal{M}_{G_{n,r}}^{(1)}.$ 

Consider the map $\mu = \mu_{x_{n-r+1}}:R_n\rightarrow \frac{R_n}{\mathcal{I}_{n,r-1}^{\langle a\rangle}}$ given by $\mu(f)=x_{n-r+1}f + \mathcal{I}_{n,r-1}^{\langle a\rangle}$ for $f\in R_n$. Then $\ker\mu =\left(\mathcal{I}_{n,r-1}^{\langle a\rangle} :
x_{n-r+1}\right)$ and let $\bar{\mu} : \frac{R_n}{
\left(\mathcal{I}_{n,r-1}^{\langle a\rangle} ~:~
x_{n-r+1}
\right)} \rightarrow 
\frac{R_n}{\mathcal{I}_{n,r-1}^{\langle a\rangle}}$
be the induced $R_n$-linear map. Thus 
there exists a short exact sequence of 
$R_n$ modules (or $\mathbb K$-vector spaces)
\begin{align}\label{KL}
0\rightarrow \frac{R_n}{\left(\mathcal{I}_{n,r-1}^{\langle a\rangle} ~:~
x_{n-r+1}
\right)}\xrightarrow{\bar{\mu}}\frac{R_n}{\mathcal{I}_{n,r-1}^{\langle a\rangle}}\xrightarrow{\nu}\frac{R_n}{\langle \mathcal{I}_{n,r-1}^{\langle a\rangle},x_{n-r+1}\rangle}\rightarrow 0,
\end{align}
where $\nu$ is the natural projection.

\begin{lemma}
Let $r\geq 1$. Then 
\begin{enumerate}
\item[{\rm (i)}] $\left(\mathcal{I}_{n,r-1}^{\langle a\rangle}:x_{n-r+1}\right)=\mathcal{I}_{n,r}^{\langle a\rangle}$ .
\item[{\rm (ii)}] $\dim_{\mathbb K}\left(\frac{R_n}{\mathcal{I}_{n,r}^{\langle a\rangle}}\right)=\dim_{\mathbb K}\left(\frac{R_n}{\mathcal{I}_{n,r-1}^{\langle a\rangle}}\right)-\dim_{\mathbb K}\left(\frac{R_{n-1}}{\mathcal{I}_{n-1,r-1}^{\langle a\rangle}}\right)$.
\end{enumerate}
\label{Lemma1}
\end{lemma}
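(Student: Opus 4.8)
The plan is to prove both parts together, with part (i) providing the computational identity that makes part (ii) an immediate consequence of the short exact sequence~\eqref{KL}. I would begin with part (i). Recall that $\mathcal{I}_{n,r-1}^{\langle a\rangle}$ is generated by the pure powers $x_i^{\omega'(i)}$ and the products $x_i^{\omega'(i)-1}x_j^{\omega'(j)-1}$, where $\omega'$ is the weight function attached to $r-1$, so $\omega'(i)=a$ for $i\in[n-r+1]$ and $\omega'(i)=a-1$ for $i\in[n]\setminus[n-r+1]$; note that $\omega'$ and $\omega$ (the weight for $r$) differ only at the single index $n-r+1$, where $\omega'(n-r+1)=a$ but $\omega(n-r+1)=a-1$. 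To compute the colon ideal $\left(\mathcal{I}_{n,r-1}^{\langle a\rangle}:x_{n-r+1}\right)$, I would divide each generator of $\mathcal{I}_{n,r-1}^{\langle a\rangle}$ by $\gcd$ with $x_{n-r+1}$: the generator $x_{n-r+1}^{a}$ contributes $x_{n-r+1}^{a-1}=x_{n-r+1}^{\omega(n-r+1)}$; each product generator $x_{n-r+1}^{a-1}x_j^{\omega'(j)-1}$ (for $j\neq n-r+1$) contributes $x_{n-r+1}^{a-2}x_j^{\omega'(j)-1}=x_{n-r+1}^{\omega(n-r+1)-1}x_j^{\omega(j)-1}$, since $\omega(j)=\omega'(j)$ for $j\neq n-r+1$; and all generators not involving $x_{n-r+1}$ are unchanged and already match the corresponding generators of $\mathcal{I}_{n,r}^{\langle a\rangle}$. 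Collecting these and checking that no spurious smaller generators appear (the pure powers $x_l^{\omega(l)}$ for $l\neq n-r+1$ coincide with those of $\mathcal{I}_{n,r-1}^{\langle a\rangle}$ because $\omega(l)=\omega'(l)$ there, and the product $x_l^{\omega(l)-1}x_m^{\omega(m)-1}$ for $l,m\neq n-r+1$ likewise), one sees the colon ideal is exactly $\mathcal{I}_{n,r}^{\langle a\rangle}$.

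For part (ii), I would feed part (i) into the short exact sequence~\eqref{KL}. Since $\dim_{\mathbb K}$ is additive on short exact sequences of finite-dimensional $\mathbb K$-vector spaces, \eqref{KL} gives
\begin{align*}
\dim_{\mathbb K}\left(\frac{R_n}{\mathcal{I}_{n,r}^{\langle a\rangle}}\right)
=\dim_{\mathbb K}\left(\frac{R_n}{\mathcal{I}_{n,r-1}^{\langle a\rangle}}\right)
-\dim_{\mathbb K}\left(\frac{R_n}{\langle\mathcal{I}_{n,r-1}^{\langle a\rangle},x_{n-r+1}\rangle}\right).
\end{align*}
It remains to identify the last term with $\dim_{\mathbb K}\left(R_{n-1}/\mathcal{I}_{n-1,r-1}^{\langle a\rangle}\right)$. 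Setting $x_{n-r+1}=0$ kills the variable $x_{n-r+1}$, so $R_n/\langle\mathcal{I}_{n,r-1}^{\langle a\rangle},x_{n-r+1}\rangle\cong \bar R/\bar{\mathcal I}$, where $\bar R=\mathbb K[x_i:i\in[n],\ i\neq n-r+1]$ is a polynomial ring in $n-1$ variables and $\bar{\mathcal I}$ is the image of $\mathcal{I}_{n,r-1}^{\langle a\rangle}$; concretely, all generators of $\mathcal{I}_{n,r-1}^{\langle a\rangle}$ that involve $x_{n-r+1}$ map to $0$, and the surviving generators are exactly the pure powers $x_l^{\omega'(l)}$ and products $x_l^{\omega'(l)-1}x_m^{\omega'(m)-1}$ over indices $l,m\in[n]\setminus\{n-r+1\}$. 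After relabelling the $n-1$ surviving variables as $x_1,\dots,x_{n-1}$, the weight pattern $\omega'$ restricted to these indices is ``$a$ on the first $n-r$ of them, $a-1$ on the remaining $r-1$'', which is precisely the weight function defining $\mathcal{I}_{n-1,r-1}^{\langle a\rangle}$ in $R_{n-1}$. Hence the two quotients are isomorphic as $\mathbb K$-vector spaces, and part (ii) follows.

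The only genuinely delicate point is the bookkeeping in part (i): one must be careful that dividing the generators of $\mathcal{I}_{n,r-1}^{\langle a\rangle}$ by $x_{n-r+1}$ produces \emph{exactly} the generating set of $\mathcal{I}_{n,r}^{\langle a\rangle}$ and nothing extra or missing — in particular that the exponent arithmetic $a-1=\omega(n-r+1)$ and $a-2=\omega(n-r+1)-1$ lines up, which implicitly uses $a\geq 2$ when $r\geq 1$ (the case $a=1$ forces the relevant quotients to be handled as a degenerate base case, or one simply notes $\omega(n-r+1)-1=0$ makes the corresponding product generator a unit and the statement still parses correctly). A clean way to organize this is to observe that for a monomial ideal $\mathcal I=\langle m_1,\dots,m_t\rangle$ one always has $(\mathcal I:x)=\langle m_1/\gcd(m_1,x),\dots,m_t/\gcd(m_t,x)\rangle$, reducing everything to a finite check on the explicit generator lists. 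Everything else — additivity of dimension, the isomorphism after setting a variable to zero — is standard. I would also remark at the end that iterating part (ii) from $r$ down to $0$, together with the known formula for $\dim_{\mathbb K}\left(R_n/\mathcal{M}_{K_{n+1}}^{(1)}\right)$, is what will ultimately yield $\dim_{\mathbb K}\left(R/\mathcal{M}_{G_{n,r}}^{(1)}\right)=\det\widetilde Q_{G_{n,r}}$.
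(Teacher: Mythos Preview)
Your proposal is correct and follows essentially the same approach as the paper: part~(i) is obtained by computing the colon ideal generator-by-generator (the paper just writes ``Clearly''), and part~(ii) follows from the short exact sequence~\eqref{KL} together with the identification $R_n/\langle \mathcal{I}_{n,r-1}^{\langle a\rangle},x_{n-r+1}\rangle\cong R_{n-1}/\mathcal{I}_{n-1,r-1}^{\langle a\rangle}$. The only cosmetic difference is that the paper inserts the intermediate equality $\langle \mathcal{I}_{n,r-1}^{\langle a\rangle},x_{n-r+1}\rangle=\langle \mathcal{I}_{n,r}^{\langle a\rangle},x_{n-r+1}\rangle$ before passing to $R_{n-1}$, whereas you identify the quotient directly; your extra care with the weight bookkeeping and the degenerate $a$ case is welcome detail the paper omits.
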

\begin{proof}
Clearly,
$\left(\mathcal{I}_{n,r-1}^{\langle a\rangle}
 : x_{n-r+1}\right)=
 \{ f \in R_n : x_{n-r+1} f \in 
 \mathcal{I}_{n,r-1}^{\langle a\rangle} \} =
 \mathcal{I}_{n,r}^{\langle a\rangle}$.
 Thus the short exact sequence (\ref{KL}) is
\(
0\rightarrow \frac{R_n}{\mathcal{I}_{n,r}^{\langle a\rangle}}\xrightarrow{\bar{\mu}}\frac{R_n}{\mathcal{I}_{n,r-1}^{\langle a\rangle}}\xrightarrow{\nu}\frac{R_n}{\langle \mathcal{I}_{n,r-1}^{\langle a\rangle},x_{n-r+1}\rangle}\rightarrow 0. 
\)
Further, We see that $\left\langle \mathcal{I}_{n,r-1}^{\langle a\rangle},~ x_{n-r+1}
\right\rangle = 
\left\langle \mathcal{I}_{n,r}^{\langle a\rangle},~ 
x_{n-r+1} \right\rangle$. Also, 
$\frac{R_n}{\left\langle 
\mathcal{I}_{n,r}^{\langle a\rangle},~ 
x_{n-r+1}\right\rangle} \cong 
\frac{R_{n-1}}
{\mathcal{I}_{n-1,r-1}^{\langle a\rangle}}$ as 
$\mathbb K$-vector spaces. Thus from the
short exact sequence of $\mathbb K$ vector spaces, we have 
\[\dim_{\mathbb K}\left(\frac{R_n}{\mathcal{I}_{n,r}^{\langle a\rangle}}\right)=\dim_{\mathbb K}\left(\frac{R_n}{\mathcal{I}_{n,r-1}^{\langle a\rangle}}\right)-\dim_{\mathbb K}\left(\frac{R_{n}}{\left\langle \mathcal{I}_{n,r-1}^{\langle a\rangle},
~ x_{n-r+1}\right\rangle}\right).\]
\hfill $\square$
\end{proof}

\begin{lemma}\label{Lemma2} Let $0 \le r \le n$. Then
\begin{enumerate}
\item[{\rm (i)}] $\dim_{\mathbb K}\left(\frac{R_n}{\mathcal{I}_{n,0}^{\langle a\rangle}}\right)=(a-1)^{n-1}(a+(n-1))$.
\item[{\rm (ii)}]
\(\dim_{\mathbb K}\left(\frac{R_n}{\mathcal{I}_{n,r}^{\langle a\rangle}}\right)=\sum_{i=0}^r(-1)^i\binom{r}{i}(a-1)^{n-i-1}(a+(n-i-1))=\sum_{i=0}^r(-1)^i\binom{r}{i}\theta_{n-i}(a-1),\)
where $\theta_l(x)=x^{l-1}(x+l)$ is a polynomial in $x$.
\end{enumerate}
\end{lemma}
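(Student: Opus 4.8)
The plan is to prove (i) directly, by enumerating the standard monomials of $\mathcal{I}_{n,0}^{\langle a\rangle}$, and then to deduce (ii) from (i) by induction on $r$, feeding the inductive step from the recursion in Lemma~\ref{Lemma1}(ii).

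For (i) I would observe that with $r=0$ the weight function is constant, $\omega(i)=a$ for all $i\in[n]$, so that $\mathcal{I}_{n,0}^{\langle a\rangle}=\langle x_i^{a},\, x_i^{a-1}x_j^{a-1}: i,j\in[n],\ i\neq j\rangle$. A monomial $\mathbf{x}^{\mathbf p}=x_1^{p_1}\cdots x_n^{p_n}$ then fails to lie in $\mathcal{I}_{n,0}^{\langle a\rangle}$ exactly when $p_i\leq a-1$ for every $i$ and at most one index $i$ has $p_i=a-1$. Splitting these monomials into those with all exponents in $\{0,\ldots,a-2\}$ and those with exactly one exponent equal to $a-1$ gives the count $(a-1)^n+n(a-1)^{n-1}=(a-1)^{n-1}(a+n-1)=\theta_n(a-1)$, which is exactly (i).

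For (ii) I would induct on $r$, treating $n$ (with $n\geq r$) as arbitrary throughout. The base case $r=0$ is (i), since the sum in (ii) then has the single term $\theta_n(a-1)$. For $r\geq 1$, assuming (ii) for $r-1$ in all admissible numbers of variables, Lemma~\ref{Lemma1}(ii) together with the inductive hypothesis applied to both of its terms gives
\[
\dim_{\mathbb K}\!\left(\frac{R_n}{\mathcal{I}_{n,r}^{\langle a\rangle}}\right)=\sum_{i=0}^{r-1}(-1)^i\binom{r-1}{i}\theta_{n-i}(a-1)-\sum_{i=0}^{r-1}(-1)^i\binom{r-1}{i}\theta_{n-1-i}(a-1).
\]
Re-indexing the second sum by $i\mapsto i-1$ turns it into $\sum_{i=1}^{r}(-1)^{i}\binom{r-1}{i-1}\theta_{n-i}(a-1)$, and then Pascal's identity $\binom{r-1}{i}+\binom{r-1}{i-1}=\binom{r}{i}$ (with the terms $i=0$ and $i=r$ treated separately) collapses the two sums into $\sum_{i=0}^{r}(-1)^i\binom{r}{i}\theta_{n-i}(a-1)$, as claimed. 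Unpacking $\theta_{n-i}(a-1)=(a-1)^{n-i-1}(a+(n-i-1))$ recovers the first form of (ii) as well.

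I do not anticipate any real difficulty; the whole argument is routine bookkeeping. The points to be careful about are the index shift and the Pascal-identity step in the induction, and verifying that the recursion of Lemma~\ref{Lemma1} is genuinely available at each stage (so one needs $r\geq 1$ and $n\geq r$, so that $\mathcal{I}_{n-1,r-1}^{\langle a\rangle}$ is defined, with $a\geq 2$ implicitly assumed so that all exponents $a-2$ occurring are nonnegative).
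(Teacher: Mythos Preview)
Your proof is correct, and part~(ii) follows exactly the paper's argument: induction on $r$ via the recursion of Lemma~\ref{Lemma1}, re-indexing the second sum, and Pascal's identity.

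The difference is in part~(i). The paper observes that $\mathcal{I}_{n,0}^{\langle a\rangle}=\mathcal{M}_\lambda$ for $\lambda=(a,a-1,\ldots,a-1)$ and then invokes the Steck determinant formula (specifically the closed form $g_n(x)=x^{n-1}(x+n)/n!$) to get $(a-1)^{n-1}(a+n-1)$. You instead enumerate the standard monomials directly: all exponents $\leq a-1$ with at most one equal to $a-1$, giving $(a-1)^n+n(a-1)^{n-1}$. Your approach is more elementary and self-contained for this particular ideal, avoiding the parking-function machinery altogether; the paper's route has the advantage of tying the count into the general $\lambda$-parking framework already set up in Section~2, but at the cost of importing a nontrivial determinant evaluation. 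Both are perfectly valid.
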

\begin{proof}
We have $\mathcal{I}_{n,0}^{\langle a\rangle}=\left\langle x_i^a, (x_ix_j)^{a-1} : i,j \in [n]; i\neq j
\right\rangle$. Thus $\dim_{\mathbb K}\left(\frac{R_n}
{\mathcal{I}_{n,0}^{\langle a\rangle}}\right)=$ number of $\lambda$-parking functions for $\lambda=(a,a-1,\ldots ,a-1)\in\mathbb N^n$. Here $\lambda=(x+1,x,\ldots ,x)$ for $x=a-1$ and $b=1$. Therefore,
\begin{align*}
\dim_{\mathbb K}\left(\frac{R_n}{\mathcal{I}_{n,0}^{\langle a\rangle}}\right)=n!\det\left(\Lambda(\lambda)\right)=(n!)g_n(a-1) 
=(a-1)^{n-1}(a+n-1).
\end{align*}
This proves {\rm (i)}.
We shall prove {\rm (ii)} by induction on $r$. For $r=0$, it follows from {\rm (i)}.
Assume $r\geq 1$. From Lemma \ref{Lemma1}, we have
\[
\dim_{\mathbb K}\left(\frac{R_n}{\mathcal{I}_{n,r}^{\langle a\rangle}}\right)=\dim_{\mathbb K}\left(\frac{R_n}{\mathcal{I}_{n,r-1}^{\langle a\rangle}}\right)-\dim_{\mathbb K}\left(\frac{R_{n-1}}{\mathcal{I}_{n-1,r-1}^{\langle a\rangle}}\right).
\]
For $n\geq r\geq 1$,
by induction assumption, 
$\dim_{\mathbb K}\left(\frac{R_n}{\mathcal{I}_{n,r-1}^{\langle a\rangle}}\right)=\sum_{i=0}^{r-1}(-1)^i\binom{r-1}{i}\theta_{n-i}(a-1)$ and 
\(
\dim_{\mathbb K}\left(\frac{R_{n-1}}{\mathcal{I}_{n-1,r-1}^{\langle a\rangle}}\right)=\sum_{i=0}^{r-1}(-1)^i\binom{r-1}{i}\theta_{n-1-i}(a-1) 
=\sum_{i=1}^{r}(-1)^{i-1}\binom{r-1}{i-1}\theta_{n-i}(a-1)\).
Thus 
\begin{align*}
\dim_{\mathbb K}\left(\frac{R_n}{\mathcal{I}_{n,r}^{\langle a\rangle}}\right)&=\sum_{i=0}^{r-1}(-1)^i\binom{r-1}{i}\theta_{n-i}(a-1)+\sum_{i=1}^{r}(-1)^i\binom{r-1}{i-1}\theta_{n-i}(a-1) \\
&=\theta_n(a-1)+\sum_{i=1}^{r-1}(-1)^i\left[\binom{r-1}{i}+\binom{r-1}{i-1}\right]\theta_{n-i}(a-1)+(-1)^r\theta_{n-r}(a-1) \\
&=\sum_{i=0}^{r}(-1)^i\binom{r}{i}\theta_{n-i}(a-1).
\end{align*}
\hfill $\square$
\end{proof}
\begin{remark} {\rm 
 Note that $I_{n,n}^{\langle a\rangle}=
 \mathcal{I}_{n,0}^{\langle a-1\rangle}$ for 
 $a \geq 2$. Thus from Lemma \ref{Lemma2}, we obtain an 
 interesting combinatorial identity :
\[
(a-2)^{n-1}(a+(n-2)) = \sum_{i=0}^n(-1)^i \binom{n}{i}(a-1)^{n-i-1}(a+(n-i-1)) \quad {\rm for} \quad n\geq 0.
\]
Being a polynomial identity in $a$, it is valid for any $a\in\mathbb R$. }
\end{remark}
\begin{proposition}\label{p1}
$\dim_{\mathbb K}\left(\frac{R_n}{\mathcal{M}_{G_{n,r}}^{(1)}}\right)=\det\left(\widetilde Q_{G_{n,r}}\right)$.
\end{proposition}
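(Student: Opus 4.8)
The strategy is to pass from the graph $G_{n,r}$ to the combinatorial ideals studied above, read off a closed form for the left-hand side from Lemma~\ref{Lemma2}, compute the right-hand side by a short determinant argument, and check that the two agree.

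First recall that $\mathcal{I}_{n,r}^{\langle n\rangle}=\mathcal{M}_{G_{n,r}}^{(1)}$: in $G_{n,r}$ the vertex $i$ has degree $n$ for $i\in[n-r]$ and degree $n-1$ for $i\in[n]\setminus[n-r]$, so writing $\omega(i)=n$ on $[n-r]$ and $\omega(i)=n-1$ elsewhere one has $m_{\{i\}}=x_i^{\omega(i)}$, while for $A=\{i,j\}\subseteq[n]$ one checks $d_A(i)=\omega(i)-1$, whence $m_A=x_i^{\omega(i)-1}x_j^{\omega(j)-1}$. Hence Lemma~\ref{Lemma2}(ii) with $a=n$ gives
\[\dim_{\mathbb K}\!\left(\frac{R_n}{\mathcal{M}_{G_{n,r}}^{(1)}}\right)=\sum_{i=0}^r(-1)^i\binom{r}{i}\,\theta_{n-i}(n-1),\qquad\theta_l(x)=x^{l-1}(x+l),\]
so everything reduces to identifying this alternating sum with $\det\widetilde Q_{G_{n,r}}$.

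For the right-hand side, note that $\widetilde Q_{G_{n,r}}=\operatorname{diag}(d_1,\dots,d_n)+(J_n-I_n)=D+J_n$, where $J_n$ is the all-ones matrix and $D=\operatorname{diag}(d_1-1,\dots,d_n-1)$. Expanding $\det(D+J_n)$ multilinearly in the columns --- equivalently, using the adjugate form of the matrix--determinant lemma $\det(D+\mathbf 1\mathbf 1^{\top})=\det D+\mathbf 1^{\top}(\operatorname{adj}D)\mathbf 1$ --- every contribution taking two columns from $J_n$ has a repeated column and so vanishes, leaving
\[\det\widetilde Q_{G_{n,r}}=\prod_{j=1}^n(d_j-1)+\sum_{k=1}^n\ \prod_{j\neq k}(d_j-1).\]
Since $d_j-1$ equals $n-1$ for the $n-r$ indices in $[n-r]$ and equals $n-2$ for the other $r$ indices, this equals
\[(n-1)^{n-r}(n-2)^r+(n-r)(n-1)^{n-r-1}(n-2)^r+r\,(n-1)^{n-r}(n-2)^{r-1}.\]

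It remains to evaluate the alternating sum. Writing $\theta_{n-i}(n-1)=(n-1)^{n-i-1}\big[(n-1)+(n-i)\big]=(n-1)^{n-i}+(n-i)(n-1)^{n-i-1}$, splitting $n-i=(n-r)+(r-i)$, and using $(r-i)\binom{r}{i}=r\binom{r-1}{i}$, repeated application of the binomial identity $\sum_i(-1)^i\binom{m}{i}(n-1)^{m-i}=(n-2)^m$ (for $m=r$ and $m=r-1$) collapses the sum to exactly the three-term expression displayed just above for $\det\widetilde Q_{G_{n,r}}$, which proves the proposition. Since that closed form involves no negative exponents, the boundary cases ($n\le 2$, or $r=0$, or $r=n$) need no separate treatment; for $r=0$ one may alternatively invoke $\det\widetilde Q_{K_{n+1}}=(n-1)^{n-1}(2n-1)$ from Section~2 together with Lemma~\ref{Lemma2}(i). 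Every step is elementary; the only one calling for a little care --- and the place where I would be most careful --- is the bookkeeping in this last binomial evaluation.
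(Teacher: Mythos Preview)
Your proof is correct and follows essentially the same route as the paper's: both identify $\mathcal{M}_{G_{n,r}}^{(1)}$ with $\mathcal{I}_{n,r}^{\langle n\rangle}$, invoke Lemma~\ref{Lemma2}(ii) at $a=n$, compute $\det\widetilde Q_{G_{n,r}}$ directly, and then collapse the alternating sum via the identities $\sum_i(-1)^i\binom{m}{i}(n-1)^{m-i}=(n-2)^m$ and $i\binom{r}{i}=r\binom{r-1}{i-1}$ (equivalently your $(r-i)\binom{r}{i}=r\binom{r-1}{i}$). The only cosmetic difference is that the paper evaluates the determinant by elementary row and column operations to reach the closed form $(n-1)^{n-r-1}(n-2)^{r-1}\bigl[(2n-1)(n-2)+r\bigr]$, whereas you use the rank-one update $\widetilde Q_{G_{n,r}}=\operatorname{diag}(d_i-1)+J_n$ and the matrix--determinant lemma to obtain the equivalent three-term expression.
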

\begin{proof}
The determinant of the truncated signless Laplace matrix $\widetilde Q_{G_{n,r}}$ of $G_{n,r}$ is given by
\begin{align}\label{e1}
\det\left(\widetilde Q_{G_{n,r}}\right) 
= (n-1)^{n-r-1}(n-2)^{r-1}\left[(2n-1)(n-2)+r\right].
\end{align}
In fact, on applying the column operation $C_1+(C_2+\ldots +C_n)$ on $\widetilde Q_{G_{n,r}}$,
 followed by the row operations $R_2-R_1,R_3-R_1,\ldots ,R_n-R_1$, $\widetilde Q_{G_{n,r}}$ reduces to the matrix 
\[ \begin{bmatrix} 
2n-1 & 1 & \cdots & 1 & 1 & \cdots & 1 \\
0 & n-1 & \cdots & 0 & 0 & \cdots & 0 \\
\vdots & \vdots & \ddots & \vdots & \vdots & \ddots & \vdots \\
0 & 0 & \cdots & n-1 & 0 & \cdots & 0 \\
-1 & 0 & \cdots & 0 & n-2 & \cdots & 0 \\
\vdots & \vdots & \ddots & \vdots & \vdots & \ddots & \vdots \\
-1 & 0 & \cdots & 0 & 0 & \cdots & n-2
\end{bmatrix}_ {n \times n} ,
\]
where $n-2$ appears as the diagonal entry
in the last $r$ rows.
Now expanding the determinant along the first column, we get (\ref{e1}).

Also, $\mathcal{I}_{n,r}^{\langle n\rangle} =
\mathcal{M}_{G_{n,r}}^{(1)}$ and from 
Lemma \ref{Lemma2}, we have 
\begin{eqnarray*}
\dim_{{\mathbb K}} \left(
\frac{R_n}{\mathcal{M}_{G_{n,r}}^{(1)}} \right)
& = & \sum_{i = 0}^{r} ~(-1)^i
\binom{r}{i} (n-1)^{n-i-1}((2n-1)-i) \\
& = & (n-1)^{n-r-1}  (2n-1) 
\left\lbrace \sum_{i=0}^r ~(-1)^i 
\binom{r}{i}(n-1)^{r-i} \right\rbrace  \\
 & & \quad + ~(n-1)^{n-r-1}
 \left\lbrace \sum_{i=0}^r(-1)^{i+1}
\binom{r}{i}~ i (n-1)^{r-i} \right\rbrace \\
& = & (n-1)^{n-r-1} \left[ (2n-1)(n-2)^r
+r(n-2)^{r-1} \right] \\
& = &(n-1)^{n-r-1}(n-2)^{r-1} \left[(2n-1)(n-2)
+r \right] \\
& = & \det \left( \widetilde{Q}_{G_{n,r}} \right).
\end{eqnarray*}
\hfill $\square$
\end{proof}

We now proceed to generalize Proposition \ref{p1} to multigraphs.
\begin{theorem}\label{RC}
Let $G$ be a multigraph on $V$ obtained from the complete multigraph $K_{n+1}^{a,b}$ on deleting some edges through the root $0$. Then
\begin{align}\label{e2}
\dim_{\mathbb K}\left(\frac{R_n}{\mathcal{M}_G^{(1)}}\right)=\det\widetilde Q_G .
\end{align}
\end{theorem}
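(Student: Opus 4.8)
The plan is to mimic the structure of Proposition \ref{p1}, but now starting from the complete multigraph $K_{n+1}^{a,b}$ instead of $K_{n+1}$, and reducing one deleted root-edge at a time. Let $G$ be obtained from $K_{n+1}^{a,b}$ by deleting $r$ edges through the root; after renumbering, say the deleted edges are among the $a$-fold edges between $0$ and $i$ for $n-r+1 \le i \le n$. For a vertex $i$ not adjacent to $0$ via all $a$ copies, its contribution to the diagonal of $\widetilde Q_G$ drops. So I would introduce a weighted analogue $\mathcal{I}_{n,r}^{\langle a, b\rangle}$ of the ideal $\mathcal{I}_{n,r}^{\langle a\rangle}$: with weight function $\omega(i) = a + (n-2)b$ for $i \in [n-r]$ and $\omega(i) = a + (n-2)b - b$ (i.e., remove one $a$-edge's worth, but note the deleted edge is to the root, contributing $1$ to the degree, or more precisely the relevant shift) for $i \in [n]\setminus[n-r]$. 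I need to get the exact shifts right by writing out $m_{\{i\}} = x_i^{d_i}$ and $m_{\{i,j\}} = x_i^{d_{\{i,j\}}(i)} x_j^{d_{\{i,j\}}(j)}$ for $G$: here $d_{\{i,j\}}(i) = a_{i0} + \sum_{k \ne 0,i,j} a_{ik} = a_{i0} + (n-2)b$, and $d_i = a_{i0} + (n-1)b$. Deleting a root-edge decreases $a_{i0}$ by $1$. So the natural ideal is $\mathcal{J}_{n,r} = \langle x_i^{d_i},\, x_i^{d_{\{i,j\}}(i)} x_j^{d_{\{i,j\}}(j)} : i \ne j \rangle$ with $d_i$ depending on whether $i \le n-r$.

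The key steps, in order: First, establish the colon-ideal identity $\left(\mathcal{J}_{n,r-1} : x_{n-r+1}\right) = \mathcal{J}_{n,r}$ — this should follow exactly as in Lemma \ref{Lemma1}, since decreasing one root-edge at vertex $n-r+1$ decreases every exponent of $x_{n-r+1}$ appearing in a generator (both the pure power and the degree-two generators) by exactly $1$, so quotienting by $x_{n-r+1}$ yields precisely the generators of $\mathcal{J}_{n,r}$. Second, write down the short exact sequence
\begin{align*}
0 \to \frac{R_n}{\mathcal{J}_{n,r}} \to \frac{R_n}{\mathcal{J}_{n,r-1}} \to \frac{R_n}{\langle \mathcal{J}_{n,r-1}, x_{n-r+1}\rangle} \to 0,
\end{align*}
and identify the last quotient with $R_{n-1}/\mathcal{J}_{n-1,r-1}'$ where the primed ideal is the corresponding multigraph ideal on one fewer vertex (note: deleting vertex $n-r+1$ from the graph lowers the degree-counts of the remaining vertices by $b$, since each loses one $b$-edge — so the recursion is not purely in $r$ but also shifts the parameter $a$-versus-$(n-2)b$ bookkeeping; I must track this carefully). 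Third, solve the recursion with base case $r=0$: for $r=0$, $\mathcal{J}_{n,0}$ is the $1$-skeleton ideal of $K_{n+1}^{a,b}$, whose colength is $(a+(n-2)b)^{n-1}(a+(2n-2)b) = \det\widetilde Q_{K_{n+1}^{a,b}}$ by equation (\ref{e0}) and the Steck/$g_n^b$ computation already recorded. Fourth, compute $\det\widetilde Q_G$ directly by the same column-then-row operations as in Proposition \ref{p1}: do $C_1 + (C_2 + \cdots + C_n)$, then $R_j - R_1$, arriving at a matrix with a $2$-by-$2$-block-triangular shape whose determinant is a product of the diagonal entries times a small correction, and check it matches the closed form from the recursion.

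The closed form I expect is $\det\widetilde Q_G = (a + (n-2)b)^{n-r-1}\,\beta^{r-1}\bigl[(a+(2n-2)b)\beta + rb^2\bigr]$ for a suitable $\beta$ (some shift of $a+(n-2)b$ by $b$), generalizing $(n-1)^{n-r-1}(n-2)^{r-1}[(2n-1)(n-2)+r]$ which is the $a = b = 1$ case — so I would first pin down $\beta$ by checking small cases ($r=0,1$) and then prove the general identity $\sum_{i=0}^r (-1)^i \binom{r}{i} \theta_{n-i}^{(b)}(\cdot) = $ that product by the same Pascal-triangle telescoping used in Lemma \ref{Lemma2}, where $\theta_l^{(b)}$ is the two-parameter analogue of $\theta_l$. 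The main obstacle will be bookkeeping the two independent shifts: deleting a root-edge shifts the $a$-part of a single vertex's degree, while passing to $R_{n-1}$ in the short exact sequence shifts the $b$-part of all remaining vertices' degrees; keeping the ideal $\mathcal{J}_{n,r}$ indexed so that these two operations both land back inside the same family (so the induction closes) is the delicate point. Everything else — the colon computation, the exact sequence, the determinant evaluation, the binomial identity — is a direct transcription of the $K_{n+1}$ argument with $a, b$ carried along.
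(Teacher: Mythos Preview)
Your plan transcribes the proof of Proposition~\ref{p1} with parameters $a,b$ carried along, aiming for a closed formula in the deletion count $r$. This would work, but it proves only a special case of the theorem: your two-valued weight function $\omega$ (and the renumbering placing the deleted edges at vertices $n-r+1,\ldots,n$) tacitly assumes that at most one of the $a$ parallel root-edges is removed at each vertex. The theorem, however, allows $a_{i0}$ to be any integer in $[0,a]$, so several edges between $0$ and the same vertex $i$ may be deleted. Your family $\mathcal{J}_{n,r}$ is not rich enough to contain those graphs, and the single index $r$ no longer determines the ideal once some $a_{i0}$ drops by more than one. The ``delicate point'' you flag --- keeping the recursion inside a manageable family --- is real, and your two-parameter indexing does not resolve it.

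The paper sidesteps this by abandoning closed formulas altogether and inducting on $n$ rather than on $r$. Using exactly the colon identity and short exact sequence you write down, it shows that for \emph{any} root-edge $e_0$ incident to vertex $j$,
\[
\dim_{\mathbb K}\left(\frac{R_n}{\mathcal{M}_G^{(1)}}\right)=\dim_{\mathbb K}\left(\frac{R_n}{\mathcal{M}_{G\setminus e_0}^{(1)}}\right)+\dim_{\mathbb K}\left(\frac{R_{n-1}}{\mathcal{M}_{G_2}^{(1)}}\right)
\quad\text{and}\quad
\det\widetilde Q_G=\det\widetilde Q_{G\setminus e_0}+\det\widetilde Q_{G_2},
\]
where $G_2$ (on $V\setminus\{j\}$, with root-multiplicities $a_r+b$) is again of the required type, now relative to $K_n^{a+b,b}$. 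By induction on $n$ the $G_2$-terms agree, so (\ref{e2}) holds for $G$ if and only if it holds for $G\setminus e_0$. Starting from $K_{n+1}^{a,b}$, where (\ref{e0}) already gives equality, one deletes root-edges one at a time --- repeatedly from the same vertex when needed --- and the theorem follows for every admissible $G$. No explicit formula for either side is ever computed, which is precisely what lets the argument absorb arbitrary deletion patterns.
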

\begin{proof} We shall prove this theorem
by induction on $n$. For $n=1$, $G=K_2^{a,0}$ for some $a\geq 0$. Then $\mathcal{M}_G^{(1)}=\langle x_1^a\rangle\subseteq R_1$ and $\widetilde Q_G=[a]$ and hence (\ref{e2}) holds. 
For $n=2$, the adjacency matrix $A(G)=
\begin{bmatrix}
0 & a_1 & a_2 \\
a_1 & 0 & b \\
a_2 & b & 0
\end{bmatrix}_{3\times 3}
$ for some $a_1,a_2\leq a$ and $b\geq 1$. Then $\mathcal{M}_G^{(1)}=\left\langle x_1^{a_1+b},x_2^{a_2+b},x_1^{a_1}x_2^{a_2}\right\rangle$ and $\widetilde Q_G=
\begin{bmatrix}
a_1+b & b \\
b & a_2+b
\end{bmatrix}_{2\times 2}.
$ Again, $\dim_{\mathbb K}\left(\frac{R_2}{\mathcal{M}_G^{(1)}}\right)=(a_1+b)(a_2+b)-b^2=\det\widetilde Q_G$ shows that (\ref{e2}) holds.
By induction assumption, suppose theorem holds for multigraphs on the vertex set $\{0,1,\ldots ,m\}$; $m<n$, obtained from $K_{m+1}^{a,b}$ on deleting some edges through the root $0$ for any $a,b\geq 1$. 

Let $n\geq 3$ and $G$ be a multigraph on $V=\{0,1,\ldots ,n\}$ obtained from $K_{n+1}^{a,b}$ on deleting some edges through the root $0$. The adjacency matrix $A(G)=[a_{ij}]_{(n+1)\times(n+1)}$ of $G$ satisfies
 $a_{0i}=a_{i0}=a_i\leq a$  and $a_{ij}=b$ for $i,j\in[n]$ with $i\neq j$. Then 
\[
\mathcal{M}_G^{(1)}=\left\langle x_l^{a_l+(n-1)b},x_i^{a_i+(n-2)b}x_j^{a_j+(n-2)b}:1\leq l\leq n,1\leq i<j\leq n\right\rangle .
\]
Let $e_0$ be a fixed edge from $0$ to $j$ in $G$ $(1\leq j\leq n)$. Consider the multigraph $G_1=G-e_0$ obtained from $G$ on deleting the edge $e_0$. Then clearly,
\[
\mathcal{M}_{G_1}^{(1)} = \left(\mathcal{M}_G^{(1)}:x_j\right)=\left\{f\in R_n:x_jf\in \mathcal{M}_G^{(1)}\right\}.
\]
Consider the
$R_n$-linear map 
$ \mu_{x_j} : R_n \rightarrow \frac{R_n}
{\mathcal{M}_G^{(1)}}$ given by
$\mu_{x_j}(f) = x_j f + \mathcal{M}_G^{(1)}$ for $f \in R_n$. Then ${\rm Ker}(\mu_{x_j}) =
\left(\mathcal{M}_G^{(1)}:x_j\right) = 
\mathcal{M}_{G_1}^{(1)}$ and there
is a short exact 
sequence of $\mathbb K$-vector spaces 
\begin{align}\label{e4}
0\rightarrow\frac{R_n}{\mathcal{M}_{G_1}^{(1)}}\xrightarrow{\bar{\mu}_{x_j}}\frac{R_n}{\mathcal{M}_G^{(1)}}\xrightarrow{\nu}\frac{R_n}{\left\langle \mathcal{M}_G^{(1)},x_j\right\rangle}\rightarrow 0 ,
\end{align}
where $\nu$ is the 
natural projection and
$\bar{\mu}_{x_j}$ is the map induced by 
$ \mu_{x_j}$.
 Let $G_2$ be a multigraph on the vertex set $V\setminus\{j\}$ with adjacency matrix $A(G_2)=\left[a_{rs}^{(2)}\right]_{\underset{r,s\neq j}{0\leq r,s\leq n}}$, where $a_{0,r}^{(2)}=a_r+b$, $a_{rs}^{(2)}=b$ for $r,s\in [n]\setminus\{j\}$, $r\neq s$.
Then, writing $R_{n-1}=\mathbb K[x_1,\ldots ,\hat{x}_j,\ldots ,x_n]$ for the polynomial ring over $\mathbb K$ in $n-1$ variables $x_1,\ldots ,x_{j-1},x_{j+1},\ldots ,x_n$, we have 
\(
\frac{R_{n-1}}{\mathcal{M}_{G_2}^{(1)}}\cong\frac{R_n}{\left\langle \mathcal{M}_G^{(1)},x_j\right\rangle}\).

Thus from the short exact sequence (\ref{e4}), we get
\begin{align}\label{e5}
\dim_{\mathbb K}\left(\frac{R_n}{\mathcal{M}_G^{(1)}}\right)=\dim_{\mathbb K}\left(\frac{R_n}{\mathcal{M}_{G_1}^{(1)}}\right)+\dim_{\mathbb K}\left(\frac{R_{n-1}}{\mathcal{M}_{G_2}^{(1)}}\right).
\end{align}
As determinant is linear on columns, we have
\begin{align}\label{e31}
\det\left(\widetilde Q_G\right)=\det\left(\widetilde Q_{G_1}\right)+\det\left(\widetilde Q_{G_2}\right).
\end{align}
By induction assumption, $\dim_{\mathbb K}\left(\frac{R_{n-1}}{\mathcal{M}_{G_2}^{(1)}}\right)=\det\left(\widetilde{Q}_{G_2}\right)$.
Thus from (\ref{e5}) and (\ref{e31}), we see that
\begin{align*}
\dim_{\mathbb K}\left(\frac{R_n}{\mathcal{M}_G^{(1)}}\right)=\det\left(\widetilde Q_G\right)\iff \dim_{\mathbb K}\left(\frac{R_n}{\mathcal{M}_{G_1}^{(1)}}\right)=\det\left(\widetilde Q_{G_1}\right).
\end{align*}
In other words, if theorem holds for a multigraph $G$ on $V$ then it also holds for the multigraph $G_1=G\setminus e_0$, and vice-versa. From (\ref{e0}),
\(
\dim_{\mathbb K}\left(\frac{R_n}{\mathcal{M}_{K_{n+1}^{a,b}}^{(1)}}\right)=\det\left(\widetilde Q_{K_{n+1}^{a,b}}\right).
\)
Thus, we see that the theorem holds for $G$ by deleting edges through the root, one by one.
\hfill $\square$
\end{proof}

\section{Positive semidefinite matrices over Nonnegative
Integers}
Let $n\geq 1$ and $M_n(\mathbb{N})$ be the set of
$n \times n$ matrices over nonnegative integers
$\mathbb{N}$. 
Let $\mathcal{G}_n=\{H=[b_{ij}]\in M_n(\mathbb N):H^t=H ~{\rm and} ~b_{ii}
\geq \max_{j \neq i} b_{ij} 
~{\rm for} ~1 \le i \le n \}$, where $H^t$ =
transpose of $H$.
 For 
$H=[b_{ij}]_{n\times n}\in\mathcal G_n$ with $\alpha_i=b_{ii} $, we consider the monomial ideal 
$\mathcal{J}_H = 
\left\langle x_i^{\alpha_i},
x_i^{\alpha_i-b_{ij}}x_j^{\alpha_j-b_{ij}}
: i,j \in [n];~i \ne j \right\rangle$ in the polynomial ring $R_n=\mathbb K[x_1,\ldots ,x_n]$.
If $H=\widetilde Q_G$, the truncated signless Laplace matrix of a multigraph $G$ on $V$, then 
$\mathcal{J}_H
= \mathcal{M}_G^{(1)}$. We shall show that $\dim_{\mathbb K}\left(\frac{R_n}
{\mathcal{J}_H}\right) \geq \det H$ for every positive semidefinite $H \in \mathcal{G}_n$. 
For this, we need the following results on symmetric or Hermitian matrices.

Let $A\in M_n(\mathbb C)$ be a Hermitian matrix and its real eigenvalues be arranged in a non-decreasing order $\lambda_1(A)\leq\lambda_2(A)\leq\cdots\leq\lambda_n(A)$. The Courant-Weyl inequalities (see \cite{HJ}) compare eigenvalues of two Hermitian matrices with their sum.
\begin{theorem}[Courant-Weyl]
Let $A,B\in M_n({\mathbb C})$ be Hermitian matrices. Then
\[
\lambda_i(A+B)
~\leq ~ \lambda_{i+j}(A)+\lambda_{n-j}(B)\quad
 for~ j=0,1,\ldots ,n-i.
\]
\label{CW}
\end{theorem}
Hadamard showed that the determinant of a positive definite matrix $M=[\alpha_{ij}]_{n\times n}$ is bounded by the product of its diagonal entries, i.e., $\det(M)\leq\alpha_{11}\alpha_{22}\cdots\alpha_{nn}$. Fischer's inequality (see \cite{HJ}) is a generalization of Hadamard's theorem.
\begin{theorem}[Fischer]
Let $M\in M_n(\mathbb C)$ be a positive semidefinite matrix having block decomposition $M=\begin{bmatrix}
A & B\\
B^{*} & C
\end{bmatrix} $
with square matrices $A$ and $C$. Then
\(
\det M ~\leq ~\det(A)\det(C).
\)
\label{F}
\end{theorem}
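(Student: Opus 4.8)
The plan is to reduce to the positive definite case and then exploit the Schur complement factorization of the determinant. First I would dispose of the degenerate possibility by a perturbation argument: for $\varepsilon>0$ the matrix $M_\varepsilon=M+\varepsilon I$ is positive definite, and its diagonal blocks are exactly $A+\varepsilon I$ and $C+\varepsilon I$. Since determinants are polynomial, hence continuous, in the entries, it suffices to establish $\det M_\varepsilon\le\det(A+\varepsilon I)\det(C+\varepsilon I)$ for every $\varepsilon>0$ and then let $\varepsilon\to0^{+}$. Thus I may assume throughout that $M$ is positive definite, in which case every principal submatrix, in particular $A$, is positive definite and therefore invertible.

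With $A$ invertible, the block $LDU$ factorization of $M$ gives $\det M=\det(A)\det(S)$, where $S=C-B^{*}A^{-1}B$ is the Schur complement of $A$ in $M$. This Schur complement inherits positive definiteness from $M$: writing $W=\begin{bmatrix}-A^{-1}B\\ I\end{bmatrix}$, a direct computation gives $S=W^{*}MW$, and since $W$ has full column rank and $M\succ0$ we get $S\succ0$. Moreover $C-S=B^{*}A^{-1}B\succeq0$ because $A^{-1}\succ0$, so in the Loewner order $0\prec S\preceq C$. The desired inequality will then follow at once from determinant monotonicity, namely $\det S\le\det C$, because $\det M=\det(A)\det(S)\le\det(A)\det(C)$.

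The only real content, and the step I would treat most carefully, is this monotonicity lemma: for Hermitian matrices with $0\prec S\preceq C$ one has $\det S\le\det C$. I would prove it by congruence to the identity. As $S\succ0$ it has a positive definite square root $S^{1/2}$; conjugating $S\preceq C$ by $S^{-1/2}$ yields $I\preceq S^{-1/2}CS^{-1/2}$, so every eigenvalue of the Hermitian matrix $S^{-1/2}CS^{-1/2}$ is at least $1$, whence $\det(S^{-1/2}CS^{-1/2})=\det(C)/\det(S)\ge1$. This gives $\det S\le\det C$, completing the positive definite case; passing to the limit $\varepsilon\to0^{+}$ then delivers the general positive semidefinite statement. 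I expect the whole argument to be short once the Schur complement is set up, and I note that the eigenvalue comparison used in the lemma is elementary and does not require the full strength of the Courant--Weyl inequalities invoked elsewhere in the paper.
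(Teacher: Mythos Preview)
Your proof is correct. The perturbation to the positive definite case, the Schur complement identity $\det M=\det(A)\det(C-B^{*}A^{-1}B)$, and the Loewner monotonicity step $0\prec S\preceq C\Rightarrow\det S\le\det C$ are all sound and cleanly argued.

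As for comparison: the paper does not actually prove Fischer's inequality at all. It states the theorem and then writes ``For a proof of Theorem~\ref{CW} and Theorem~\ref{F}, we refer to the book of Horn and Johnson~\cite{HJ}.'' So there is no in-paper argument to compare against; you have supplied a self-contained proof where the authors merely cite a reference. Your Schur complement approach is in fact essentially the standard proof found in Horn and Johnson, so in spirit you have filled in exactly what the citation points to. Your closing remark that the argument avoids Courant--Weyl is accurate but somewhat beside the point here, since the paper invokes Courant--Weyl only later, in the proof of Theorem~\ref{MT}, not for Fischer's inequality itself.
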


For a proof of Theorem \ref{CW} and Theorem \ref{F}, we
refer to the book of Horn and Johnson \cite{HJ}. Now,
using Courant-Weyl inequalities and Fischer's inequality, we prove the following result.
\begin{theorem}\label{MT}
Let $H\in\mathcal G_n$ be  positive semidefinite
 and $\mathcal{J}_H$ be the monomial ideal in the polynomial ring $R=R_n$ associated to $H$. Then 
\[
\dim_{\mathbb K}\left(\frac{R_n}{\mathcal{J}_H}\right)\geq\det H.
\]
\end{theorem}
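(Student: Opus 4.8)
Since $H$ is positive semidefinite, $\det H\ge 0$, and if $\det H\le 0$ the inequality $\dim_{\mathbb K}(R_n/\mathcal J_H)\ge\det H$ is trivial, the left-hand side being a nonnegative integer; so I may assume $H$ is positive definite. The plan is then a double induction --- the outer one on $n$ (the case $n=1$ being immediate, since $\mathcal J_H=\langle x_1^{\alpha_1}\rangle$), the inner one on $\alpha_1+\dots+\alpha_n$ --- carried out in the spirit of the proof of Theorem \ref{RC}. First I would record a small lemma: a positive definite $H\in\mathcal G_n$ always has an index $v$ with $\alpha_v>\max_{j\neq v}b_{vj}$. Otherwise each $v$ would admit some $j=\pi(v)$ with $b_{v\pi(v)}=\alpha_v$, and positivity of the $2\times 2$ principal minor on $\{v,\pi(v)\}$, which equals $\alpha_v\alpha_{\pi(v)}-\alpha_v^2$, would force $\alpha_{\pi(v)}>\alpha_v$ for every $v$ --- impossible on a finite set.

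Fixing such a $v$, the inequalities $\alpha_v-b_{iv}\ge 1$ for all $i$ make the peeling of $x_v$ stay inside the class of ideals under study. In the short exact sequence
\[
0\longrightarrow \frac{R_n}{(\mathcal J_H:x_v)}\xrightarrow{\ \bar{\mu}_{x_v}\ }\frac{R_n}{\mathcal J_H}\xrightarrow{\ \nu\ }\frac{R_n}{\langle\mathcal J_H,x_v\rangle}\longrightarrow 0
\]
one checks $(\mathcal J_H:x_v)=\mathcal J_{\widehat H}$ with $\widehat H=H-e_ve_v^{t}\in\mathcal G_n$, while $R_n/\langle\mathcal J_H,x_v\rangle\cong R_{n-1}/\mathcal J_{A_v}$, where $A_v$ is the principal submatrix of $H$ obtained by deleting row and column $v$ (again a member of $\mathcal G_{n-1}$, and positive semidefinite). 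Hence $\dim_{\mathbb K}(R_n/\mathcal J_H)=\dim_{\mathbb K}(R_n/\mathcal J_{\widehat H})+\dim_{\mathbb K}(R_{n-1}/\mathcal J_{A_v})$. Expanding the determinant along column $v$ gives the parallel identity $\det H=\det\widehat H+\det A_v$, and Fischer's inequality (Theorem \ref{F}) provides the accompanying determinant bounds, in particular $\det A_v\ge 0$ and $\det H\le\alpha_v\det A_v$.

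By the outer induction hypothesis $\dim_{\mathbb K}(R_{n-1}/\mathcal J_{A_v})\ge\det A_v$. For the term of $\widehat H$ I would invoke the Courant--Weyl inequalities (Theorem \ref{CW}): since $\widehat H=H-e_ve_v^{t}$ is a rank-one downward perturbation of the positive definite $H$, taking $A=\widehat H$, $B=e_ve_v^{t}$ and $j=1$ gives $\lambda_k(H)\le\lambda_{k+1}(\widehat H)$ for $1\le k\le n-1$, so $\lambda_{k+1}(\widehat H)\ge\lambda_k(H)>0$ and $\widehat H$ has at most one non-positive eigenvalue. Therefore either $\widehat H$ is positive definite, and since $\sum_i\widehat\alpha_i=\sum_i\alpha_i-1$ the inner induction hypothesis gives $\dim_{\mathbb K}(R_n/\mathcal J_{\widehat H})\ge\det\widehat H$; or $\lambda_1(\widehat H)\le 0$, whence $\det\widehat H=\lambda_1(\widehat H)\prod_{k\ge 2}\lambda_k(\widehat H)\le 0$ and $\dim_{\mathbb K}(R_n/\mathcal J_{\widehat H})\ge 0\ge\det\widehat H$ trivially. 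In both cases $\dim_{\mathbb K}(R_n/\mathcal J_H)\ge\det\widehat H+\det A_v=\det H$, and the induction closes (as $\sum_i\alpha_i$ decreases one eventually meets either $n=1$ or a non-positive-definite matrix, both already disposed of).

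The main obstacle is the second technical input: whereas for an honest truncated signless Laplacian the peeled matrix $\widehat H=H-e_ve_v^{t}$ is again a truncated signless Laplacian, hence still positive semidefinite, for a general $H\in\mathcal G_n$ positive semidefiniteness of $\widehat H$ can fail, and the induction must survive this. The point that rescues it is precisely the Courant--Weyl estimate above, which confines any loss of definiteness to a single eigenvalue and thereby forces $\det\widehat H\le 0$ in the bad case, collapsing it to the trivial bound. The first input --- pinning the colon and quotient ideals to the same shape --- is handled once and for all by the existence lemma for a coordinate $v$ with $\alpha_v>\max_{j\neq v}b_{vj}$; the remaining additivities (of vector-space dimensions along a short exact sequence, of determinants along a column) are exactly as in Theorem \ref{RC}.
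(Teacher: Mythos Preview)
Your argument is correct, and in fact cleaner than the paper's.  Both proofs run the same short-exact-sequence machinery and both invoke Courant--Weyl to control what happens when a diagonal entry is lowered, but the decompositions differ.  The paper locates the maximum off-diagonal entry $b$, reorders so that the $(r{+}1)$-st row has its ``large'' entries $=b$ to the right, and colons by the \emph{power} $x_{r+1}^{\alpha_{r+1}-b}$; this produces a block picture in which Fischer's inequality (and then Hadamard on the lower block) is genuinely needed to bound $\det T'$ by $\bigl(\prod_{l>r+1}(\alpha_l-b)\bigr)\det H_1$.  You instead prove the pleasant lemma that a positive \emph{definite} $H\in\mathcal G_n$ always has a row $v$ with $\alpha_v>\max_{j\neq v}b_{vj}$, colon by the single variable $x_v$, and run a secondary induction on the trace.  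The resulting identities $\dim(R_n/\mathcal J_H)=\dim(R_n/\mathcal J_{\widehat H})+\dim(R_{n-1}/\mathcal J_{A_v})$ and $\det H=\det\widehat H+\det A_v$ match term-by-term, so the only nontrivial analytic input you need is the rank-one interlacing $\lambda_k(H)\le\lambda_{k+1}(\widehat H)$, which forces $\det\widehat H\le 0$ whenever $\widehat H$ fails to be positive definite.  Your appeal to Fischer is in fact superfluous: neither $\det A_v\ge 0$ nor $\det H\le\alpha_v\det A_v$ is used anywhere in the chain of inequalities, so your route dispenses with Theorem~\ref{F} entirely.  The trade-off is that the paper's induction is single (on $n$) while yours is double (on $n$ and on $\sum_i\alpha_i$), but the latter is manifestly well-founded over $\mathbb N$ and the payoff is a shorter, more transparent estimate.
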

\begin{proof}
We shall proof this theorem by induction on the order $n$ of $H$. For $n=1$, $H=[\alpha_1]_{1\times 1}$ and 
$\mathcal{J}_H =
\left\langle x_1^{\alpha_1} \right\rangle$, 
and thus $\dim_{\mathbb K}\left(\frac{R_1}
{\mathcal{J}_H}\right) = \alpha_1 = \det H$. 
For $n=2$, $H=
\begin{bmatrix}
\alpha_1 & a_{12} \\
a_{12} & \alpha_2
\end{bmatrix}_{2\times 2}
$ and $\mathcal{J}_H = \left\langle x_1^{\alpha_1},
x_2^{\alpha_2}, x_1^{\alpha_1-a_{12}}
x_2^{\alpha_2-a_{12}} \right\rangle \subseteq R_2$.
Again, $\dim_{\mathbb K}\left(\frac{R_2}
{\mathcal{J}_H}\right) = \alpha_1\alpha_2-a_{12}^2
= \det H$. Assume that $n\geq 3$ and the theorem holds for every positive semidefinite matrices in 
$\mathcal{G}_m$ for $1 \le m < n$. 
Let $H=[a_{ij}]_{n\times n} \in \mathcal{G}_n$ with
$\alpha_i=a_{ii}$. 
Let $b= \max \{ a_{ij} : i,j \in [n]; ~i \ne j\}$.
 On permuting rows and columns of $H$, obtain $H'=\left[a_{ij}'\right] \in \mathcal{G}_n$ similar to $H$ such that there exists an integer $r$ ($0
 \leq r \leq n-2$) satisfying $a_{i,r+1}' < b$ 
 and $a_{r+1,j}' = b$ for $1 \leq i < r+1 < j \leq n$. The monomial ideal $\mathcal{J}_{H'}$ 
 is obtained from $\mathcal{J}_H$ by renumbering variables. Thus
  $\dim_{\mathbb K}\left(\frac{R_n}{\mathcal{J}_H}\right) = \dim_{\mathbb K}\left(\frac{R_n}
  {\mathcal{J}_{H'}}\right)$ and 
  $\det H = \det H'$. Hence, 
  without loss of generality, assume 
  that $H = H'$, i.e., 
  there exists $r~(0 \leq r \leq n-2)$ such 
  that $a_{i,r+1} < b$ and $a_{r+1,j} = b$ 
  for $1 \leq i< r+1 < j \leq n$. 
  Let $\mu^{\prime} : R_n \rightarrow \frac{R_n}{\mathcal{J}_H}$
  be the $R_n$-linear map given by 
  $\mu^{\prime}(f) = 
  x_{r+1}^{\alpha_{r+1} -b} f 
  + \mathcal{J}_H$ for $ f \in R_n$.
  Then $ \ker \mu^{\prime} = \left(\mathcal{J}_H : 
  x_{r+1}^{\alpha_{r+1}-b} \right) = 
  \left\{f \in R_n : x_{r+1}^{\alpha_{r+1}-b}f \in 
  \mathcal{J}_H \right\}$.
  Now as in (\ref{KL}), there is a 
  short exact sequence of 
  $\mathbb K$-vector spaces,
\begin{align}\label{e6}
0 \rightarrow \frac{R_n}{\left(\mathcal{J}_H : 
x_{r+1}^{\alpha_{r+1}-b}\right)}
\xrightarrow{{\bar{\mu}}^{\prime}} \frac{R_n}{\mathcal{J}_H}\xrightarrow{\nu} \frac{R_n}
{\left\langle \mathcal{J}_H,~
x_{r+1}^{\alpha_{r+1}-b} \right\rangle}
\rightarrow 0,
\end{align}
where $\nu$ is natural projection and
${\bar{\mu}}^{\prime}$ is the map induced by 
$\mu^{\prime}$.

Let $H_1=
\begin{bmatrix}
\alpha_1 & a_{1,2} & \cdots & a_{1,r+1} \\
a_{1,2} & \alpha_2 & \cdots & a_{2,r+1} \\
\vdots & \vdots & \ddots & \vdots \\
a_{1,r+1} & a_{2,r+1} & \cdots & b
\end{bmatrix}_{(r+1)\times(r+1)}.
$
In other words, $H_1$ is the principal 
$(r+1)\times (r+1)$ submatrix of $H$ consisting of the first $r+1$ rows and columns,
except the entry $\alpha_{r+1}$ is 
replaced by $b$. 
Then $H_1\in\mathcal G_{r+1}$. 
If $\alpha_{r+1}=b$, then $H_1$, 
being a principal submatrix of $H$, is positive semidefinite. We see that
$\left(\mathcal{J}_H : x_{r+1}^{\alpha_{r+1}-b}\right)
= \left\langle \mathcal{J}_{H_1},
~x_l^{\alpha_l-b} : r+2\leq l \leq n \right\rangle$,
where $\mathcal{J}_{H_1}\subseteq R_{r+1}
= \mathbb K[x_1,\ldots ,x_{r+1}]$. Thus
\begin{align}\label{e7}
\dim_{\mathbb K}\left(\frac{R_n}{\left(\mathcal{J}_H : x_{r+1}^{\alpha_{r+1}-b}\right)} \right)
= \dim_{\mathbb K}\left(\frac{R_{r+1}}
{\mathcal{J}_{H_1}}\right) ~
\left(\prod_{l=r+2}^n(\alpha_l-b)\right) .
\end{align}
Let $H_2$ be the $(n-1)\times(n-1)$ submatrix of $H$ obtained on deleting $(r+1)$th row and $(r+1)$th column. As $H_2\in\mathcal G_{n-1}$ is positive semidefinite,
 the monomial ideal $\mathcal{J}_{H_2}
 \subseteq \mathbb K\left[x_1,\ldots ,\hat{x}_{r+1},\ldots ,x_n\right]=R_{n-1}$ satisfies 
 $\dim_{\mathbb K}\left(\frac{R_{n-1}}
 {\mathcal{J}_{H_2}}\right) \geq \det H_2$, 
 by induction assumption. 
 Also $\left\langle \mathcal{J}_H,~
 x_{r+1}^{\alpha_{r+1}-b} \right\rangle
 = \left\langle \mathcal{J}_{H_2},~
 x_{r+1}^{\alpha_{r+1}-b} \right\rangle$. 
 Thus 
\begin{align}\label{e8}
\dim_{\mathbb K}\left(\frac{R_n}
{\left\langle \mathcal{J}_H,
~x_{r+1}^{\alpha_{r+1}-b} \right\rangle}\right) = (\alpha_{r+1}-b) ~ \dim_{\mathbb K}\left(\frac{R_{n-1}}{\mathcal{J}_{H_2}}\right).
\end{align}
From (\ref{e6}), (\ref{e7}) and (\ref{e8}), 
we have
\begin{align}\label{e9}
\dim_{\mathbb K}\left(\frac{R_n}
{\mathcal{J}_H}\right) =
\left(\prod_{l=r+2}^n(\alpha_l-b)\right)~
\dim_{\mathbb K}\left(\frac{R_{r+1}}
{\mathcal{J}_{H_1}}\right)
+ (\alpha_{r+1}-b) ~
\dim_{\mathbb K}\left(\frac{R_{n-1}}
{\mathcal{J}_{H_2}}\right).
\end{align}
As determinant is linear on columns, writing 
$\alpha_{r+1}=(\alpha_{r+1}-b)+b$ in $H$, we have
\begin{align}\label{e27}
\det H=(\alpha_{r+1}-b)\det H_2+\det T,
\end{align}
where $T$ is the matrix $H$, 
 except $\alpha_{r+1}$ is replaced with $b$.
  On applying elementary column and row operations, $C_{r+2}-C_{r+1},R_{r+2}-R_{r+1},\ldots, C_n-C_{r+1},R_n-R_{r+1}$ on $T$, it reduces to the matrix
\[
T'=
\begin{bmatrix}
\alpha_1 & \cdots & a_{1,r} & a_{1,r+1} & a_{1,r+2}-a_{1,r+1} & \cdots & a_{1,n}-a_{1,r+1} \\
\vdots & \ddots & \vdots & \vdots & \vdots & \ddots & \vdots \\
a_{1,r} & \cdots & \alpha_r & a_{r,r+1} & a_{r,r+2}-a_{r,r+1} & \cdots & a_{r,n}-a_{r,r+1} \\
a_{1,r+1} & \cdots & a_{r,r+1} & b & 0 & \cdots & 0 \\
a_{1,r+2}-a_{1,r+1} & \cdots & a_{r,r+2}-a_{r,r+1} & 0 & \alpha_{r+2}-b & \cdots & a_{r+2,n}-b \\
\vdots & \ddots & \vdots & \vdots & \vdots & \ddots & \vdots \\
a_{1,n}-a_{1,r+1} & \cdots & a_{r,n}-a_{r,r+1} & 0 & a_{r+2,n}-b & \cdots & \alpha_n-b  
\end{bmatrix}_{n\times n}
\]
Let $\varepsilon_{i,j}$ be the $n\times n$ matrix with $1$ at $(i,j)$th place and zero elsewhere. 
Then $P=I_n - \sum_{j=r+2}^n ~\varepsilon_{r+1,j}
= I_n - \left(\varepsilon_{r+1,r+2}+ \ldots +
\varepsilon_{r+1,n} \right)$ has determinant 
$\det P=1$ and $P^t T P = T'$.
Thus $\det T=\det T'$. Now we consider two cases.

\noindent
{\bf Case I :}
 $\det T \leq 0$. Then from (\ref{e27}), 
 $\det H \leq (\alpha_{r+1}-b) \det H_2$.
Thus by induction assumption and (\ref{e9}), we get
\[
\det H ~\leq ~ (\alpha_{r+1}-b)~ \dim_{\mathbb K}\left(\frac{R_{n-1}}{\mathcal{J}_{H_2}}\right) ~
\leq ~ \dim_{\mathbb K}\left(\frac{R_n}{\mathcal{J}_H}\right).
\]
{\bf Case II :}
$\det T>0$. If $\alpha_{r+1}=b$, then $H=T$ 
is positive definite.
Otherwise, $H=T+S$, where 
$S=(\alpha_{r+1}-b)~\varepsilon_{r+1,r+1}$.
Clearly, $\lambda_1(S)=\cdots =\lambda_{n-1}(S)=0$ and $\lambda_n(S)=\alpha_{r+1}-b$.
Since $H$ is positive semidefinite, $0\leq\lambda_1(H)\leq\lambda_2(H)\leq\cdots\leq\lambda_n(H)$.
Taking $i=j=1$ in the Courant-Weyl inequalities with $H=T+S$, we obtain $\lambda_1(H)\leq\lambda_2(T)+\lambda_{n-1}(S)=\lambda_2(T)$.
Thus $0\leq\lambda_2(T)\leq\ldots\leq\lambda_n(T)$. As $\det(T)=\prod_{i=1}^n\lambda_i(T)>0$, $T$ must be positive definite. Hence $T'=P^t T P$ is also positive definite. Thus by Fischer's inequality,
\[
\det T=\det T'\leq\det(H_1)\det(C),
\]
where $C=
\begin{bmatrix}
\alpha_{r+2}-b & \cdots & a_{r+2,n}-b \\
\vdots & \ddots & \vdots \\
a_{r+2,n}-b & \cdots & \alpha_n-b
\end{bmatrix}.
$
The matrix $C$, being a principal submatrix of
$T'$, is also positive definite. Thus by Hadamard's theorem, $\det C ~\leq ~\prod_{l=r+2}^n(\alpha_l-b)$. Hence,
\begin{align}\label{e10}
\det(T) ~\leq ~ \left(\prod_{l=r+2}^n(\alpha_l-b)\right)\det(H_1).
\end{align}
From (\ref{e27}) and (\ref{e10}), 
\begin{align*}
\det H ~\leq ~\left(\prod_{l=r+2}^n(\alpha_l-b)\right)
~ \det H_1+(\alpha_{r+1}-b)~\det H_2.
\end{align*}
Now by (\ref{e9}) and induction assumption, we have
\begin{align*}
\dim_{\mathbb K}\left(\frac{R_n}{\mathcal{J}_H}\right)~
\geq~ \left(\prod_{l=r+2}^n(\alpha_l-b)\right)~
\det H_1 + (\alpha_{r-1}-b)~\det H_2 
~\geq ~ \det H.
\end{align*}
\hfill $\square$
\end{proof}
\begin{corollary}\label{MG}
Let $G$ be a multigraph on $V=\{0,1,\ldots ,n\}$. Then 
\[
\dim_{\mathbb K}\left(\frac{R_n}{\mathcal{M}_G^{(1)}}\right)
~ \geq ~ \det\widetilde Q_G.
\]
\end{corollary}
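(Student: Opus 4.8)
The plan is to derive Corollary \ref{MG} as an immediate consequence of Theorem \ref{MT}, by verifying that the truncated signless Laplace matrix $\widetilde Q_G$ of any multigraph $G$ on $V$ falls into the class $\mathcal G_n$ of positive semidefinite integer matrices to which Theorem \ref{MT} applies, and that the associated monomial ideal $\mathcal{J}_{\widetilde Q_G}$ coincides with the $1$-skeleton ideal $\mathcal{M}_G^{(1)}$.

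First I would record that $\widetilde Q_G = D' + A'$, where $A' = [a_{ij}]_{i,j\in[n]}$ is the adjacency matrix of $G$ restricted to the non-root vertices and $D' = \mathrm{diag}[d_1,\ldots,d_n]$ with $d_i = \sum_{j\in V\setminus\{i\}} a_{ij}$ the degree of $i$ in $G$. In particular $\widetilde Q_G$ is a symmetric matrix over $\mathbb N$, its $(i,i)$ entry is $\alpha_i = d_i$, and its off-diagonal $(i,j)$ entry is $a_{ij}$. Since $d_i = \sum_{k\in V\setminus\{i\}} a_{ik} \ge a_{ij}$ for every $j\neq i$ (all $a_{ik}\ge 0$ because $G$ is loopless with nonnegative multiplicities, and $0\in V\setminus\{i\}$ contributes $a_{i0}\ge 0$), the condition $\alpha_i = \alpha_{ii}\ge \max_{j\neq i}\alpha_{ij}$ holds, so $\widetilde Q_G\in\mathcal G_n$. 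Next, positive semidefiniteness: the full signless Laplace matrix satisfies $x^t Q_G x = \sum_{\{u,v\}\in E(G)} (x_u+x_v)^2 \ge 0$ for all real vectors $x$ indexed by $V$ (summing over edges with multiplicity), hence $Q_G$ is positive semidefinite, and $\widetilde Q_G$, being a principal submatrix of $Q_G$, is positive semidefinite as well.

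Then I would check the ideal identification. With $\alpha_i = d_i = d_{\{i\}}(i)$ and off-diagonal entry $b_{ij}=a_{ij}$, the definition of $\mathcal J_H$ gives generators $x_i^{d_i}$ and $x_i^{d_i - a_{ij}} x_j^{d_j - a_{ij}}$. But for a singleton $A=\{i\}$ we have $m_A = x_i^{d_{\{i\}}(i)} = x_i^{d_i}$, and for a two-element set $A=\{i,j\}$ we have $m_A = x_i^{d_A(i)} x_j^{d_A(j)} = x_i^{d_i - a_{ij}} x_j^{d_j - a_{ij}}$, since $d_A(i) = \sum_{k\in V\setminus\{i,j\}} a_{ik} = d_i - a_{ij}$. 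These are exactly the generators of $\mathcal{M}_G^{(1)} = \langle m_A : \emptyset\neq A\subseteq[n],\ |A|\le 2\rangle$. Hence $\mathcal J_{\widetilde Q_G} = \mathcal M_G^{(1)}$, and applying Theorem \ref{MT} with $H = \widetilde Q_G$ yields $\dim_{\mathbb K}\!\left(\frac{R_n}{\mathcal M_G^{(1)}}\right) = \dim_{\mathbb K}\!\left(\frac{R_n}{\mathcal J_{\widetilde Q_G}}\right) \ge \det\widetilde Q_G$, which is the claim.

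Since all the substantive work has already been done in Theorem \ref{MT}, there is no real obstacle here; the only points requiring a sentence of care are the positive semidefiniteness of $\widetilde Q_G$ (best seen via the edge-wise sum-of-squares expansion of $Q_G$ and restriction to a principal submatrix) and the bookkeeping that $d_A(i) = d_i - a_{ij}$ for $A=\{i,j\}$, which makes the generator sets of $\mathcal J_{\widetilde Q_G}$ and $\mathcal M_G^{(1)}$ literally identical. The special case $\dim_{\mathbb K}(R/\mathcal M_G^{(1)}) \ge \det\widetilde Q_G$ for simple $G$ is then Dochtermann's conjecture, and Theorem \ref{RC} shows the inequality is sharp for the family of graphs obtained from complete (multi)graphs by deleting edges at the root.
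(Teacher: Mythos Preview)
Your proposal is correct and follows exactly the paper's approach: the paper's proof of Corollary~\ref{MG} is the single sentence ``Take $H=\widetilde Q_G$ in Theorem~\ref{MT},'' relying on the identification $\mathcal{J}_{\widetilde Q_G}=\mathcal{M}_G^{(1)}$ already stated just before Theorem~\ref{MT}. You have simply supplied the verifications (membership in $\mathcal G_n$, positive semidefiniteness via the edge sum-of-squares, and the generator-by-generator matching of the two ideals) that the paper leaves implicit.
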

\begin{proof}
Take $H=\widetilde Q_G$ in Theorem \ref{MT}.
\end{proof}

{\bf Acknowledgments:} The third author is thankful
to CSIR, Government of India for financial support.

\end{document}